\theoremstyle{plain}
\newtheorem{thm}{Theorem}[section]
\newtheorem{cor}[thm]{Corollary}
\newtheorem{prop}[thm]{Proposition}
\newtheorem{lem}[thm]{Lemma}
\theoremstyle{definition}
\newtheorem{rem}[thm]{Remark}
\newtheorem{eg}[thm]{Example}
\newcommand{\bA}{{\mathbb{A}}}
\newcommand{\bB}{{\mathbb{B}}}
\newcommand{\bC}{{\mathbb{C}}}
\newcommand{\bD}{{\mathbb{D}}}
  \newcommand{\A}{{\mathcal{A}}}
  \newcommand{\B}{{\mathcal{B}}}
  \newcommand{\I}{{\mathcal{I}}}
  \newcommand{\M}{{\mathcal{M}}}
  \newcommand{\U}{{\mathcal{U}}}
\renewcommand{\phi}{\varphi}
\newcommand{\upchi}{{\raise.35ex\hbox{\ensuremath{\chi}}}}
\newcommand{\ol}{\overline}
\title{Pick interpolation in several variables}
\author[R. Hamilton]{Ryan Hamilton}
\address{Pure Math.\ Dept.\\U. Waterloo\\Waterloo, ON\;
N2L--3G1\\CANADA}
\email{rhamilto@uwaterloo.ca}
\date{\today}
\thanks{R.~Hamilton is partially supported by an NSERC graduate scholarship}
\subjclass[2000]{Primary 47A57; Secondary 30E05, 46E22}
\keywords{Pick interpolation, reproducing kernel, dual algebras}
\begin{document}

\begin{abstract}
We investigate the Pick problem for the polydisk and unit ball using dual algebra techniques. 
Some factorization results for Bergman spaces are used to describe a Pick theorem for any bounded region in $\mathbb{C}^d$.
\end{abstract}

\maketitle

\section{Introduction}
Suppose $z_1, \dots, z_n$ are distinct points in the complex open disk $\bD$ 
and  $w_1, \dots, w_n$ are complex numbers. 
The classic Pick interpolation theorem for $\bD$ states the following:
{\em there is a holomorphic function $f$ on $\bD$ satisfying $f(z_i) = w_i$ for $i=1, \dots ,n$ and 
$\sup \{ | f(z) |: z \in \bD \} \leq 1$ if and only if the matrix $\left [( 1-w_i \ol{w_j})(z_i\ol{z_j})^{-1} \right ] $ is positive 
semidefinite}.

It is well known that, in general, the analogue of Pick's theorem does not hold for domains other than the disk. The single matrix present in Pick's theorem is
 typically replaced with an infinite family of matrices. 
Abrahamse's  interpolation theorem  for multiply connected regions \cite{Abram79} was the first
appearance of this phenomenon, where the family of matrices 
are naturally parametrized by a polytorus. 
  Cole, Lewis and Wermer \cite{CLW} approached the Pick problem in substantial generality by considering the problem for any uniform algebra. 
  They showed that a solution exists when the Pick matrices associated to a large class of measures are all positive semidefinite. In \cite{AMc99}, Agler and McCarthy carried out a deep analysis of the Pick problem for the bidisk.
Though an infinite family of Pick matrices once again appeared, their simultaneous positivity was reforumulated
as a factorization problem for $H^\infty(\bD^2)$.  These results also hold for the polydisk, but the associated Pick theorem
is not given in terms of the $H^\infty(\bD^d)$ norm. See \cite{AMc02} and the references therein for
a detailed treatment of all of these results.

 In this paper, we present
two Pick-type theorems in a multivariable setting.
For $d \geq 2$, the open unit ball and polydisk in $\bC^d$ will be denoted $\bB_d$ and $\bD^d$, respectively. 
For a bounded domain $\Omega \subset \bC^d$ and Lebesgue measure $\mu$ on $\Omega$, the Bergman space $L_a^2(\Omega)$ is defined as those functions which are analytic on $\Omega$ and contained in $L^2(\Omega, \mu)$.
If $\Omega$ is either $\bB_d$ by $\bD^d$, the Hardy space $H^2(\Omega)$ is defined as the closure of the multivariable analytic polynomials
in $L^2(\partial \Omega, \theta)$, where $\theta$ is Lebesgue measure on $\partial \Omega$.  The algebra of bounded analytic functions on $\Omega$ will be denoted $H^\infty(\Omega)$. 
In Theorem~\ref{T:thm1} below, a Pick theorem for the polydisk and unit ball is obtained. Theorem~\ref{T:thm1} improves upon existing interpolation
theorems for these domains by explicitly describing the associated family of Pick matrices in terms of a certain class of absolutely continuous measures. 
Moreover, no distinction is made between the $d=2$ and $d > 2$ case.
In Theorem~\ref{T:thm2} a Pick result for any bounded domain
in $\bC^d$ is established using Bergman spaces. As is the case with Theorem~\ref{T:thm1}, the associated Pick matrices arise from absolutely continuous measures
on $\Omega$. Both theorems are also valid for any weak$^*$-closed subalgebra of $H^\infty(\Omega)$, a feature which
is typically absent in Pick interpolation results.
In both theorems, the symbol $k^\nu$ refers to a reproducing kernel function described in Section~\ref{S:Alg}.

\begin{thm}\label{T:thm1}
Suppose $\Omega$ is either $\bD^d$ or $\bB_d$ and that $z_1, \dots, z_n \in \Omega$ and $w_1, \dots, w_n \in \bC$. Let $\A$ be any weak$^*$-closed
subalgebra of $H^\infty(\Omega)$.
There is a  function $\phi \in \A$ with $\sup_{z \in \Omega} |\phi(z)| \leq 1$ and $\phi(z_i) = w_i$ for $i=1, \dots,  n$ if and only if the matrix
\[ 
\left [
(1 - w_i\ol{w_j})k^\nu(z_i,z_j)
\right ]_{i,j=1}^n \geq 0
\]
is positive semidefinite for every measure of the form $\nu = |f|^2\theta$, where $\theta$ is Lebesgue measure on $\partial \Omega$ and $f \in  H^2(\Omega)$.
\end{thm}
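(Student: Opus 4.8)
The plan is to split the equivalence into the routine necessity direction and the substantive sufficiency direction, the latter carried out by a duality argument whose engine is the factorisation theory for the Hardy spaces of $\Omega$. For necessity, fix an admissible measure $\nu = |f|^2\theta$ and realise $\A$ as multiplication operators on the reproducing kernel space $H^2(\nu)$, with kernel vectors $k^\nu_z = k^\nu(\cdot,z)$ satisfying $\langle g, k^\nu_z\rangle = g(z)$. A function $\phi \in \A$ with $\sup_\Omega|\phi|\le 1$ has boundary modulus at most $1$ on $\partial\Omega = \operatorname{supp}\nu$, so $M_\phi$ is a contraction on $H^2(\nu)$ and $I - M_\phi M_\phi^* \ge 0$. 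Since $M_\phi^* k^\nu_z = \overline{\phi(z)}\,k^\nu_z$, for scalars $c_1,\dots,c_n$ and $u = \sum_j c_j k^\nu_{z_j}$ one computes
\[ \sum_{i,j=1}^n \overline{c_i}\,c_j\,(1 - w_i\overline{w_j})\,k^\nu(z_i,z_j) = \big\langle (I - M_\phi M_\phi^*)\,u,\,u\big\rangle \ge 0, \]
which is exactly the claimed positivity of the Pick matrix.

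For sufficiency I would argue by duality. Since $\A$ is weak$^*$-closed it is a dual space with predual $\A_*$, and its unit ball is weak$^*$-compact; the evaluations $E_{z_i}\colon\phi\mapsto\phi(z_i)$ are weak$^*$-continuous, so they lie in $\A_*$. The map $\phi\mapsto(\phi(z_1),\dots,\phi(z_n))$ sends the unit ball of $\A$ onto a compact convex set $\mathcal K\subseteq\bC^n$, and a contractive interpolant exists exactly when $w=(w_1,\dots,w_n)\in\mathcal K$. By Hahn--Banach separation in $\bC^n$ this membership is equivalent to
\[ \Big| \sum_{i=1}^n c_i\,w_i \Big| \;\le\; \Big\| \sum_{i=1}^n c_i\,E_{z_i} \Big\|_{\A_*} \qquad \text{for all } c_1,\dots,c_n \in \bC, \]
the right-hand side being $\sup\{\,|\sum_i c_i\phi(z_i)| : \phi\in\A,\ \norm{\phi}_\infty\le 1\,\}$; here weak$^*$-compactness is what upgrades this estimate to an \emph{exact} interpolant rather than an approximate one. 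The whole problem thus reduces to bounding the predual norm of $\Lambda_c = \sum_i c_i E_{z_i}$ from below by $|\sum_i c_i w_i|$.

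The measures $\nu=|f|^2\theta$ enter through the representing functionals of $\A_*$. Each evaluation is integration against a boundary (Poisson) kernel, $E_{z_i}(\phi)=\int_{\partial\Omega}\phi\,P_{z_i}\,d\theta$, so $\Lambda_c$ is represented by an $L^1(\partial\Omega,\theta)$ symbol and its predual norm equals the distance of that symbol to the preannihilator of $\A$. The factorisation results for the Hardy spaces of $\Omega$ --- in essence a weak factorisation $H^1(\Omega)=H^2(\Omega)\cdot H^2(\Omega)$ --- are precisely what rewrite this $L^1$ datum as $|f|^2\theta$ with $f\in H^2(\Omega)$, and correspondingly realise $\Lambda_c$ as a vector functional $\phi\mapsto\langle M_\phi u, u'\rangle$ over a single space $H^2(\nu)$ with $u,u'$ finite combinations of the kernels $k^\nu_{z_i}$. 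Once $\Lambda_c$ is presented in this form its norm is computed from the vector norms, and the hypothesised positivity for that $\nu$, rewritten as
\[ \sum_{i,j=1}^n \overline{c_i}\,c_j\,w_i\overline{w_j}\,k^\nu(z_i,z_j) \;\le\; \sum_{i,j=1}^n \overline{c_i}\,c_j\,k^\nu(z_i,z_j), \]
is what must be reconciled with the displayed duality inequality to place $w$ in $\mathcal K$.

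The hard part will be this factorisation step: proving that the family $\{\,|f|^2\theta : f\in H^2(\Omega)\,\}$ is \emph{norming} for the functionals $\Lambda_c$, i.e.\ that every relevant weak$^*$-continuous functional factors through a single vector functional over one of these spaces with the vector norms matching the predual norm, and that the resulting quadratic form is exactly the $w$-twisted form controlled by the Pick matrix. In one variable this is the classical $H^1=H^2\cdot H^2$ factorisation, but on $\bB_d$ and $\bD^d$ the corresponding statement is considerably more delicate and forms the technical heart of the argument. It is also the only place that distinguishes a general weak$^*$-closed subalgebra $\A$ from all of $H^\infty(\Omega)$ and that must be made to work uniformly in the dimension $d$, since the factorisation and the kernels $k^\nu$ are the sole ingredients sensitive to the choice of $\A$ and to whether $d=2$ or $d>2$.
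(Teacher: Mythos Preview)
Your necessity argument is correct and matches the paper's. The sufficiency outline has the right instincts---duality plus Hardy-space factorisation---but there is a genuine gap in the bridge between the two, and one of your intermediate claims is not right.

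The specific problem is the sentence asserting that the factorisation ``correspondingly realise[s] $\Lambda_c$ as a vector functional $\phi\mapsto\langle M_\phi u, u'\rangle$ over a single space $H^2(\nu)$ with $u,u'$ finite combinations of the kernels $k^\nu_{z_i}$.'' Bercovici--Westwood (which is the relevant factorisation, and is an \emph{approximate $L^1$} factorisation rather than the exact $H^1=H^2\cdot H^2$ you invoke) only gives $\Lambda_c(\phi)=\langle M_\phi f,g\rangle_{H^2(\Omega)}$ with arbitrary $f,g\in H^2(\Omega)$ and $\|f\|\,\|g\|$ close to $\|\Lambda_c\|_{\A_*}$; it does not hand you kernel combinations in any weighted space. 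As a result the ``reconciliation'' you defer---turning the quadratic inequality $\sum \overline{c_i}c_j w_i\overline{w_j}k^\nu(z_i,z_j)\le \sum \overline{c_i}c_j k^\nu(z_i,z_j)$ into the linear bound $|\sum c_i w_i|\le\|\Lambda_c\|_{\A_*}$---never gets off the ground, because you have not produced the $\nu$ against which to test it.

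The paper fills this gap by interposing a cyclic-subspace reduction. The Bercovici--Westwood factorisation is packaged as property $\bA_1(1)$ for $M(H^2(\Omega))$, which is hereditary to any weak$^*$-closed $\A$. A separate black-box theorem of Davidson--Hamilton then says: under $\bA_1(1)$, a contractive interpolant exists iff the Pick matrices $[(1-w_i\overline{w_j})\langle k^f_{z_j},k^f_{z_i}\rangle]$ are positive for every cyclic subspace $\A[f]$, $f\in H^2(\Omega)$. The remaining step---and the one you are effectively missing---is to identify each $\A[f]$, as a reproducing kernel Hilbert space, with $\A^2(|f|^2\theta)$ via the unitary $V\colon \phi\mapsto \phi f$; this sends $k^f_z$ to a scalar multiple of $k^{|f|^2\theta}_z$, so the two Pick matrices differ by a rank-one Schur factor and are simultaneously positive. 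That is how the measures $\nu=|f|^2\theta$ actually enter: not through factoring $\Lambda_c$, but as the weights for which $\A^2(\nu)\cong\A[f]$. If you want to salvage your Hahn--Banach route, the object you should be computing is not $\|\Lambda_c\|_{\A_*}$ directly but the quotient norm of any interpolant modulo the vanishing ideal, and $\bA_1(1)$ is what lets you compute that quotient norm as a supremum over cyclic subspaces---which is precisely the content of the Davidson--Hamilton theorem you would otherwise have to reprove.
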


\begin{thm} \label{T:thm2}
Suppose $\Omega$ is a bounded domain in $\bC^d$ and that $z_1, \dots, z_n \in \Omega$ and $w_1, \dots, w_n \in \bC$.
Let $\A$ be any weak$^*$-closed
subalgebra of $H^\infty(\Omega)$.
There is a  function $\phi \in \A$  with $\sup_{z \in \Omega} |\phi(z)| \leq 1$ and $\phi(z_i) = w_i$ for $i=1, \dots,  n$ if and only if the matrix
\[ 
\left [
(1 - w_i\ol{w_j})k^\nu(z_i,z_j)
\right ]_{i,j=1}^n \geq 0
\]
is positive semidefinite for every measure of the form $\nu = |f|^2\mu$, where $\mu$ is Lebesgue measure on $\Omega$ and $f \in  L_a^2(\Omega)$.
\end{thm}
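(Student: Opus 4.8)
The plan is to establish necessity by a direct reproducing-kernel estimate and sufficiency by a duality argument: I will turn the failure of interpolation into a single separating functional and then invoke the Bergman factorization results to realize that functional through one of the admissible measures $\nu=|f|^2\mu$, thereby exhibiting a Pick matrix that is not positive semidefinite.

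\emph{Necessity.} Suppose $\phi\in\A$ with $\sup_{z\in\Omega}|\phi(z)|\le1$ and $\phi(z_i)=w_i$. Fix a measure $\nu=|f|^2\mu$ with $f\in L_a^2(\Omega)$ and let $\H(k^\nu)$ be the reproducing-kernel space of Section~\ref{S:Alg}, on which $\phi$ acts as a multiplier $M_\phi$ with $\norm{M_\phi}\le\sup_\Omega|\phi|\le1$ and $M_\phi^*k^\nu_z=\ol{\phi(z)}\,k^\nu_z$. Then for any $c\in\bC^n$, setting $u=\sum_j\ol{c_j}\,k^\nu_{z_j}$ and using $\phi(z_i)=w_i$,
\[
\sum_{i,j}c_i\ol{c_j}\,(1-w_i\ol{w_j})\,k^\nu(z_i,z_j)=\Bip{(I-M_\phi M_\phi^*)\,u,\,u}\ge0,
\]
because $I-M_\phi M_\phi^*\ge0$. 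As $c$ and $\nu$ were arbitrary, every Pick matrix is positive semidefinite.

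\emph{Sufficiency.} Represent $\Hinf(\Omega)$, hence $\A$, as multiplication operators on the Bergman space $L_a^2(\Omega)$; for a bounded domain this representation is isometric and a weak$^*$-homeomorphism, so $\A$ is a weak$^*$-closed subalgebra of $\B(L_a^2(\Omega))$ whose unit ball is weak$^*$-compact. Since $\one\in L_a^2(\Omega)$ and $g(z)=\ip{g,K_z}$ for the Bergman kernel $K_z$, each evaluation $\phi\mapsto\phi(z_i)=\ip{M_\phi\one,K_{z_i}}$ is weak$^*$-continuous; hence
\[
C=\big\{(\phi(z_1),\dots,\phi(z_n)):\phi\in\A,\ \textstyle\sup_\Omega|\phi|\le1\big\}
\]
is a compact, convex, balanced subset of $\bC^n$, and interpolation is solvable exactly when $w=(w_1,\dots,w_n)\in C$. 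Arguing by contraposition, assume $w\notin C$. Hahn–Banach separation produces $\lambda\in\bC^n$, which we rescale so that
\[
\norm{L_\lambda}:=\sup\Big\{\Big|\sum_i\lambda_i\phi(z_i)\Big|:\phi\in\A,\ \textstyle\sup_\Omega|\phi|\le1\Big\}\le1<\Big|\sum_i\lambda_iw_i\Big|,
\]
where $L_\lambda(\phi)=\sum_i\lambda_i\phi(z_i)=\ip{M_\phi\one,\,g_\lambda}$ is the weak$^*$-continuous, \emph{rank-one} functional determined by the single vector $g_\lambda=\sum_i\ol{\lambda_i}\,K_{z_i}$.

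The crux is to convert this scalar obstruction into the failure of a Pick matrix. Here I would use the factorization results for Bergman spaces to diagonalize $L_\lambda$: the goal is to manufacture $f\in L_a^2(\Omega)$ whose measure $\nu=|f|^2\mu$ satisfies $\int_\Omega\phi\,d\nu=\ip{M_\phi f,f}$ and captures the same obstruction, so that the bound $\norm{L_\lambda}\le1<|\sum_i\lambda_iw_i|$ is transported, through the reproducing kernel $k^\nu$ of $\H(k^\nu)$, to a vector $c\in\bC^n$ with $\sum_{i,j}c_i\ol{c_j}(1-w_i\ol{w_j})k^\nu(z_i,z_j)<0$. This contradicts the hypothesis and forces $w\in C$. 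The passage from the rank-one functional $L_\lambda$---whose left slot is fixed at $\one$---to a genuinely diagonal positive functional $|f|^2\mu$ is the main obstacle, and is exactly the point at which Bergman factorization is indispensable; it plays the role that inner--outer (Riesz) factorization plays for the Hardy space in Theorem~\ref{T:thm1}, and its availability for an arbitrary bounded domain is what removes any restriction on $d$ and any need for a boundary Hardy space. Weak$^*$-closedness of $\A$ enters only in the duality computation of $\norm{L_\lambda}$, so no special features of the subalgebra are used. Finally, the degenerate case---when no interpolant exists because the evaluation constraints are already inconsistent on $\A$---corresponds to an unbounded separating direction $\lambda$ and is handled by the same duality.
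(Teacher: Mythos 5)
Your necessity argument is correct and is in substance the paper's easy direction (the paper runs the contraction argument on the cyclic subspaces $\A[f]\subset L^2_a(\Omega)$ and transfers to $\A^2(|f|^2\mu)$ via the Schur-product identity of Corollary~\ref{C:kernel}; you work directly in $\A^2(\nu)$, which is fine apart from the small point that $\A^2(\nu)$ is a reproducing kernel space only on the subset $\Omega_f$ appearing in Theorem~\ref{T:kernel}, so nodes outside that set need a word). The sufficiency direction, however, has a genuine gap, and it sits exactly where you flag it. The step you describe as ``the goal'' --- manufacturing $f\in L^2_a(\Omega)$ with $L_\lambda(\phi)=\int_\Omega\phi\,d\nu=\ip{M_\phi f,f}$ for $\nu=|f|^2\mu$ --- is not merely unproved; as stated it is impossible. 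The functional $\phi\mapsto\ip{M_\phi f,f}$ sends $\one$ to $\|f\|^2>0$, while $L_\lambda(\one)=\sum_i\lambda_i$ is an arbitrary complex number, so a separating functional admits no such ``diagonal'' (positive-type) representation, and no rescaling repairs this. Capturing the obstruction by a measure of the form $|f|^2\mu$ is the whole content of the theorem, not an intermediate step that can be deferred to unspecified ``factorization results.''

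The machinery that actually closes this gap, and which the paper assembles, uses a \emph{two-vector} factorization. Prunaru's theorem (Theorem~\ref{T:prunaru}) gives property $\bA_1(1)$ for $M(L^2_a(\Omega))$, and this property is hereditary for weak$^*$-closed subalgebras; hence for $r<1$ the functional $rL_\lambda$ factors as $rL_\lambda(\phi)=\ip{M_\phi x,y}$ with $\|x\|\,\|y\|<1$, where $x$ and $y$ are in general different vectors. The measure is built from the left vector only: $\nu=|x|^2\mu$. The right vector is handled by the ideal structure: since $L_\lambda$ annihilates the ideal $\I$ of functions in $\A$ vanishing at $z_1,\dots,z_n$, the projection of $y$ onto $\A[x]$ is orthogonal to $\I[x]$ and therefore lies in the finite-dimensional span of the cyclic-subspace kernels $k^x_{z_i}$ (Lemma 2.1 of \cite{DH}), say $P_{\A[x]}y=\sum_i\ol{c_i}\,k^x_{z_i}$. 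Positivity of the Pick matrix for $k^x$ --- equivalent to positivity for $k^\nu$ by Theorem~\ref{T:kernel} and Corollary~\ref{C:kernel} --- says exactly that $\|\sum_i\ol{c_iw_i}\,k^x_{z_i}\|\le\|\sum_i\ol{c_i}\,k^x_{z_i}\|$, which yields $r\,|\sum_i\lambda_iw_i|\le\|x\|\,\|y\|<1$ and contradicts your separation inequality as $r\to1$. This is precisely the content of Theorem~\ref{T:DH} (Theorem 3.4 of \cite{DH}); the paper's proof of Theorem~\ref{T:thm2} is the assembly of Theorem~\ref{T:DH} with Theorem~\ref{T:kernel} and Corollary~\ref{C:kernel} into Theorem~\ref{T:main}, then Prunaru plus hereditariness of $\bA_1(1)$ into Corollary~\ref{C:prun}, specialized to $H=L^2_a(\Omega)$. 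Your duality setup (weak$^*$-compactness of the attainable set $C$ and Hahn--Banach separation) is the right frame and is indeed how \cite{DH} proceeds, but without the two-vector factorization and the cyclic-subspace kernel expansion the proof is incomplete at its central step.
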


Both Theorem~\ref{T:thm1} and Theorem~\ref{T:thm2} provide significant simplifications of the Cole-Lewis-Wermer 
approach for the algebra $H^\infty(\Omega)$ and its subalgebras. Even though an infinite family of kernel functions
is still required, they are given a concrete description. 
These theorems will be established using some recent results of Davidson and the author
\cite{DH}, where a general framework for Pick-type theorems was developed using the theory of dual
operator algebras and their preduals. Dual algebra techniques in Pick interpolation
 can also be seen in the paper of McCullough \cite{McC}.

In Section~\ref{S:Alg}, we will briefly
discuss reproducing kernel Hilbert spaces and their multipliers. A natural notion of equivalence between reproducing kernel
Hilbert spaces is introduced and applied to cyclic subspaces of $H^2(\Omega)$ and $L^2_a(\Omega)$ in Theorem~\ref{T:kernel}.
In order to prove the desired results, some dual algebra results of Bercovici-Westood \cite{BW} and Prunaru \cite{Prun} are invoked which, when combined
with the other results in Section~\ref{S:Alg}, establish the desired results.

\section{Reproducing kernel Hilbert spaces and their multipliers} \label{S:Alg}

We say that a Hilbert space $H$ of $\bC$-valued functions on $X$ is a \emph{reproducing kernel Hilbert space} if point evaluations are continuous.
For every $x \in X$, the \emph{reproducing kernel at $x$} is the function $k_x \in H$ which satisfies $f(x) = \langle f, k_x \rangle$ for $f \in H$. The associated positive definite
kernel on $X \times X$ is given by $k(x,y) := \langle k_y,k_x\rangle$. A \emph{multiplier} $\phi$ of $H$ is a function on $X$
such that $\phi f \in H$ for every $f \in H$.  Each multiplier $\phi$ induces a bounded multiplication operator $M_\phi$ on $H$. 
The \emph{multiplier algebra of $H$}, denoted $M(H)$, is the
operator algebra consisting of all such $M_\phi$. The multiplier algebra is unital, maximal abelian and closed in the weak operator topology. The adjoints of multiplication 
operators are characterized by the fundamental identity $M_\phi^*k_x = \ol{\phi(x)}k_x$.

\begin{eg}
The kernel functions for the Hardy spaces $H^2(\bD^d)$ and $H^2(\bB_d)$ are given by
\[k^{H^2(\bD^d)}(z,w) = \prod_{k=1}^d\frac{1}{1 - z_i\ol{w_i}}\: \text{  and  } \: k^{H^2(\bB_d)}(z,w) = \frac{1}{\left (1 -\langle z, w \rangle_{\bC^d}\right )^d} , \]
respectively. When $\Omega$ is either $\bD^d$ or $\bB_d$, the multiplier algebra $M(H^2(\Omega))$ is equal to $H^\infty(\Omega)$, and this identification is isometric: $\| M_\phi \| = \| \phi \|_\infty$ for $\phi \in H^\infty(\Omega)$.  If $\Omega$ is any bounded domain in $\bC^d$, the kernel function for the Bergman spaces $L^2_a(\Omega)$ is generally difficult to compute.  Some familiar examples are given by
\[k^{L_a^2(\bD^d)}(z,w) = \prod_{k=1}^d\frac{1}{(1 - z_i\ol{w_i})^2}\: \text{  and  } \: k^{L^2_a(\bB_d)}(z,w) = \frac{1}{\left (1 -\langle z, w \rangle_{\bC^d}\right)^{d+1}} . \]
It is easy to verify that $M(L^2_a(\Omega) = H^\infty(\Omega)$ and that $\|M_\phi\| = \|\phi\|_\infty$ for any $\phi \in H^\infty(\Omega)$.  See the book of Krantz \cite{Krantz} for detailed treatment of these spaces. 
\end{eg}
This note will concern itself only with reproducing kernel Hilbert spaces of analytic functions.  We further assume that $H$ is endowed an $L^2$ norm, i.e. there is some
set $\Delta$ along with a $\sigma$-algebra of subsets and measure $\mu$ such that $H$ is a closed subspace of $L^2(\Delta, \mu)$. 
The set $\Delta$ may play different roles depending on the context.  For the Hardy space of the polydisk or unit ball, $\Delta$ is taken to be either $\partial \bD^2$ or $\partial \bB_d$, respectively and $\mu$ is the corresponding Lebesgue measure on these sets. For the Bergman space $L^2_a(\Omega)$, we simply take $\Delta = \Omega$ and $\mu$ to be Lebesgue measure on $\Omega$.
We also assume that $H$ contains the constant function $1$, so that every multiplier of $H$ is contained in $H$.

Given an algebra of multipliers $\A$ on $H$ and a measure $\nu$ on $\Delta$, let $\A^2(\nu)$ denote the closure of $\A$ in $L^2(\Delta, \nu)$. The measure $\nu$ is said to be \emph{dominating} for $X$ (with respect to $\A$) if $\A^2(\nu)$ is a reproducing kernel Hilbert space on $X$. We will write $k_x^{A^2(\nu)}$ for the reproducing kernel on this space, or more simply as $k_x^\nu$ when the context is clear.
The associated positive definite kernel function on $X \times X$ will be denoted $k^\nu(x,y) := \langle k_y^\nu, k_x^\nu \rangle_{A^2(\nu)}$.
For any such $\nu$, $\A$ is obviously an algebra of multipliers on $\A^2(\nu)$. 
\begin{rem}
Our notion of a dominating measure differs slightly from Cole-Lewis-Wermer \cite{CLW}.  In their setting, $\A$ is
a uniform algebra and $\Delta$ is the maximal idea space of $\A$. A measure $\mu$ on $\Delta$ is said to be \emph{dominating} for
a subset $\Lambda$ of $\Delta$ if there is a constant $C$ such that $|\phi(\lambda)| \leq C\|\phi\|_{L^2(\mu)}$ for every $\lambda \in \Lambda$.
Their theorem solves the so-called \emph{weak} Pick problem for $\A$:
Given $\epsilon > 0$, $w_1, \dots, w_n \in \bC$ and $\lambda_1, \dots, \lambda_n \in \Delta$, there is a function $\phi \in \A$
with $\phi(\lambda_i) = w_i$ for $i=1,\dots,n$ and $\| \phi \| < 1 + \epsilon$ if and only if
\[[(1-w_i\ol{w_j})k^\mu(\lambda_i,\lambda_j)] \geq 0 \]
for every  measure $\mu$ which is dominating for $\{\lambda_1, \dots, \lambda_n \}$.
\end{rem}

A unital, weak-$*$ closed subalgebra of $B(H)$ will be called a \emph{dual algebra}.  The predual $\A_*$ of
a dual algebra may be identified canonically with a quotient of the trace class operators on $H$.
A dual algebra $\A$ is said to have property $\bA_1(1)$ if every $\pi \in \A_*$ with $\| \pi \| < 1$ may be written as 
\[\pi(A) = \langle Ax,y \rangle_H \]
for some $x,y \in H$ which satisfy $\|x\|\|y\| < 1$. See the manuscript of Bercovici, Foia\c s and Pearcy \cite{ABFP} for a detailed treatment
of dual algebras and the structure of their preduals.

If $H$ is a reproducing kernel Hilbert space
and $\A \subset M(H)$ is a dual algebra, we call $\A$ a \emph{dual algebra of multipliers}.
If $L$ is an invariant subspace of $\A$, then $L$ is also a reproducing kernel Hilbert space
with the kernel function at $x$ given by $P_Lk_x$.  Clearly $\A$ is also a dual algebra of multipliers on $L$, 
and we denote the multiplication operator associated to $\phi$ as $M_f^L$.  It follows that
$(M^L_\phi)^*P_Lk_x = \ol{\phi(x)}P_Lk_x$
for $x \in X$.
Note that if $f(x) = 0$ for every $f \in L$, then $P_Lk_x = 0$.  Since we are principally concerned
with evaluation of multipliers, it is useful to extend the kernel function for $L$ to points
which are annihilated by every function in $L$. If $f \in H$, we denote
the closed cyclic subspace generated by $\A$ and $f$ as $\A[f]$.  The following lemma appears as Lemma 2.1 in  \cite{DH}.

\begin{lem}
Suppose $\A$ is a dual algebra of multipliers on a reproducing kernel Hilbert space $H$ and let $\I_x$ denote the ideal
of functions in $\A$ which vanish on $x$.  If $f \in H$ and $\A[f] \neq I_x[f]$, then the one-dimensional subspace
$\A[f] \ominus I_x[f]$ contains a non-zero vector $k_x^f$ such that $P_{\A[f]}k_x = k_x^f$ if $f(x) \neq 0$ and 
$(M_\phi^f)^*k_x^f = \ol{\phi(x)}k_x^f$.
\end{lem}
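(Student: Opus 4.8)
The plan is to exploit that $\I_x$ has codimension one in $\A$ and to push this relation down to the cyclic subspaces. First I would record the structural identity $\A[f] = \I_x[f] + \bC f$. Point evaluation $\phi \mapsto \phi(x)$ is a bounded functional on $\A$ (since $\phi = M_\phi 1 \in H$ and $\phi(x) = \langle \phi, k_x\rangle$, so $|\phi(x)| \leq \|M_\phi\|\,\|1\|\,\|k_x\|$), it is multiplicative, and it sends $1$ to $1$; hence $\I_x = \ker(\mathrm{ev}_x)$ is closed of codimension one and $\A = \I_x \oplus \bC 1$ as vector spaces. Applying the linear map $\phi \mapsto M_\phi f$ gives $\A f = \I_x f + \bC f$, and after taking closures the right-hand side becomes $\I_x[f] + \bC f$, which is already closed since $\I_x[f]$ is closed and $\bC f$ is one-dimensional. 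Thus $\A[f] = \I_x[f] + \bC f$, so $\A[f] \ominus \I_x[f]$ has dimension at most one; the hypothesis $\A[f] \neq \I_x[f]$ forces it to be exactly one-dimensional, and I fix a non-zero spanning vector $k_x^f$.

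Next I would establish the eigenvalue identity, which is the heart of the statement and holds whether or not $f(x)$ vanishes. The key point is that $\I_x[f]$ is $M_\phi$-invariant for every $\phi \in \A$: since $\I_x$ is an ideal, $M_\phi M_\psi f = M_{\phi\psi} f \in \I_x f$ for $\psi \in \I_x$, and invariance of the closure follows by continuity. Consequently the one-dimensional space $\A[f] \ominus \I_x[f]$ is invariant under $(M_\phi^f)^*$, so $(M_\phi^f)^* k_x^f = \lambda_\phi k_x^f$ for some scalar $\lambda_\phi$. To evaluate $\lambda_\phi$ I would pair against $f$: writing $M_\phi f = M_{\phi - \phi(x)1} f + \phi(x) f$ with $\phi - \phi(x)1 \in \I_x$, the first summand lies in $\I_x[f] \perp k_x^f$, so $\langle k_x^f, M_\phi f\rangle = \ol{\phi(x)}\langle k_x^f, f\rangle$. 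Comparing this with $\langle k_x^f, M_\phi^f f\rangle = \langle (M_\phi^f)^* k_x^f, f\rangle = \lambda_\phi\langle k_x^f, f\rangle$, and noting $\langle k_x^f, f\rangle \neq 0$ (otherwise $k_x^f$ would be orthogonal to all of $\A[f] = \I_x[f] + \bC f$, forcing $k_x^f = 0$), I conclude $\lambda_\phi = \ol{\phi(x)}$.

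For the reproducing property, suppose $f(x) \neq 0$. Every $h \in \I_x[f]$ satisfies $h(x) = 0$ (as $(M_\psi f)(x) = \psi(x) f(x) = 0$ for $\psi \in \I_x$, together with continuity of $\mathrm{ev}_x$), so $\langle h, P_{\A[f]} k_x\rangle = h(x) = 0$; hence $P_{\A[f]} k_x \in \A[f] \ominus \I_x[f]$. Since $f \in \A[f]$ does not vanish at $x$, the reproducing kernel $P_{\A[f]} k_x$ of $\A[f]$ is non-zero, and I would simply set $k_x^f := P_{\A[f]} k_x$; its eigenvalue identity is then exactly the invariant-subspace relation recorded before the lemma (and agrees with the computation above). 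When $f(x) = 0$ the reproducing clause is vacuous, so any spanning vector serves.

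I expect the only genuinely delicate point to be the first step: that the algebraic decomposition $\A[f] = \I_x[f] + \bC f$ survives the passage to closures, so that $\A[f] \ominus \I_x[f]$ is truly one-dimensional rather than merely admitting $f$ as a cyclic generator modulo $\I_x[f]$. This rests on the elementary fact that a finite-dimensional perturbation of a closed subspace is closed, which is precisely where the boundedness of $\mathrm{ev}_x$ on $\A$, and hence the closedness and finite codimension of $\I_x$, is indispensable.
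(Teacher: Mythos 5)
Your proof is correct; note that the paper itself contains no proof of this lemma (it is quoted as Lemma 2.1 of \cite{DH}), and your argument---the decomposition $\A[f]=\I_x[f]+\bC f$, co-invariance of the one-dimensional complement $\A[f]\ominus\I_x[f]$ under each $(M_\phi^f)^*$, and identification of the eigenvalue by pairing against $f$---is essentially the same route taken in that reference. One correction to your closing remark: the boundedness of $\mathrm{ev}_x$ on $\A$ is not actually needed anywhere, since the decomposition $\A=\I_x+\bC 1$ is purely algebraic (it uses only multiplicativity of evaluation), and the closure step rests solely on the closedness of $\I_x[f]$ (which holds by definition) together with the elementary fact that adding the finite-dimensional space $\bC f$ preserves closedness.
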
 

 If $L = \A[f]$ for $f \in H$, we will use the notation
$k^f_x$ and $M^f_\phi$ for $k^L_x$ and $M_\phi^L$, respectively. 
The following result of Davidson and the author (\cite{DH}, Theorem 3.4) gives a Pick theorem 
for dual algebras of multipliers which have property $\bA_1(1)$.

\begin{thm} \label{T:DH}
Suppose that  $H$  is both a reproducing kernel Hilbert space 
over a set $X$ and  that $\A$ is a dual algebra of multipliers on 
$H$ which has property $\bA_1(1)$. Then the following statement holds:
given $x_1, \dots, x_n \in X$ and $w_1, \dots, w_n \in \bC$, there is a multiplier $\phi \in \A$
such that $\phi(x_i) = w_i$ and $\| M_\phi \| \leq 1$ if and only if the matrix
\[ 
\left [
(1 - w_i\ol{w_j})\langle k_{x_j}^f,k_{x_i}^f \rangle
\right ] 
\]
are positive semidefinite for every $f \in H$.
\end{thm}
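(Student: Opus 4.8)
The plan is to prove the two implications separately, with necessity a short compression argument and sufficiency carrying the substance through a duality scheme powered by property $\bA_1(1)$. For necessity, suppose $\phi \in \A$ satisfies $\norm{M_\phi} \le 1$ and $\phi(x_i) = w_i$. Fix $f \in H$ and pass to the cyclic subspace $\A[f] = \ol{\A f}$, which is invariant for $\A$; the compression $M_\phi^f = M_\phi|_{\A[f]}$ then satisfies $\norm{M_\phi^f} \le \norm{M_\phi} \le 1$. Using the eigenvector identity $(M_\phi^f)^* k_{x_i}^f = \ol{\phi(x_i)}k_{x_i}^f = \ol{w_i}k_{x_i}^f$ supplied by the lemma quoted above, I would apply the contraction $(M_\phi^f)^*$ to $v = \sum_j a_j k_{x_j}^f$ and expand $\norm{(M_\phi^f)^* v}^2 \le \norm{v}^2$. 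This inequality is exactly $\sum_{i,j} \ol{a_i} a_j (1 - w_i\ol{w_j}) \ip{k_{x_j}^f, k_{x_i}^f} \ge 0$, so the Pick matrix is positive semidefinite for every $f$.

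For the converse I would set up a separation argument. Let $E : \A \to \bC^n$ be the evaluation $E(\phi) = (\phi(x_1), \dots, \phi(x_n))$, which is weak-$*$ continuous since each coordinate is the rank-one functional $\phi \mapsto \ip{M_\phi k_{x_i}, k_{x_i}}/\norm{k_{x_i}}^2$, and let $R = E(\A_1)$ be the image of the closed unit ball $\A_1$ of $\A$. Then $R$ is an absolutely convex, weak-$*$ compact subset of $\bC^n$, so by the bipolar theorem $(w_1, \dots, w_n) \in R$ if and only if $|\sum_i w_i \ol{c_i}| \le \norm{\Lambda_c}_{\A_*}$ for every $c \in \bC^n$, where $\Lambda_c \in \A_*$ is the functional determined by $\Lambda_c(M_\phi) = \sum_i \ol{c_i}\phi(x_i)$. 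The task is therefore reduced to computing $\norm{\Lambda_c}_{\A_*}$ and matching it against the Pick condition.

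This is where property $\bA_1(1)$ is indispensable, and I expect it to be the \emph{main obstacle}. It guarantees that $\norm{\Lambda_c}_{\A_*}$ is the infimum of $\norm{f}\,\norm{g}$ over \emph{single} rank-one representations $\Lambda_c(A) = \ip{Af, g}$, rather than over sums of such terms. Given one representation I would first replace $g$ by $P_{\A[f]}g$, which alters neither $\Lambda_c$ nor increases the norm, and then invoke the lemma's relations $k_{x_j}^f = P_{\A[f]}k_{x_j}$ and $(M_\phi^f)^* k_{x_j}^f = \ol{\phi(x_j)}k_{x_j}^f$ to obtain $\ip{M_\phi^f f, k_{x_j}^f} = \phi(x_j) f(x_j)$. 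Since the vectors $M_\phi^f f$ are dense in $\A[f]$, this forces $g = \sum_j \frac{c_j}{\ol{f(x_j)}} k_{x_j}^f$, whence $\norm{\Lambda_c}_{\A_*} = \inf_f \norm{f}\, \norm{\sum_j \frac{c_j}{\ol{f(x_j)}} k_{x_j}^f}$, the infimum over $f \in H$ with $f(x_j) \ne 0$ for all $j$. The delicate points are justifying that the second vector must be a combination of the kernels and handling the degenerate case $f(x_j) = 0$ by perturbation.

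Finally, substituting $a_j = c_j/\ol{f(x_j)}$ and using the reproducing identity $\ip{f, k_{x_i}^f} = f(x_i)$, the required inequality $|\sum_i w_i \ol{c_i}| \le \norm{\Lambda_c}_{\A_*}$ (for all $c$) becomes the family of inequalities $|\ip{f, \sum_i a_i \ol{w_i} k_{x_i}^f}| \le \norm{f}\, \norm{\sum_j a_j k_{x_j}^f}$ over all $f$ and all $a \in \bC^n$. By Cauchy--Schwarz the left-hand side is at most $\norm{f}\,\norm{\sum_i a_i \ol{w_i} k_{x_i}^f}$, and the positive semidefiniteness of the Pick matrix for $f$ is precisely $\norm{\sum_i a_i \ol{w_i} k_{x_i}^f} \le \norm{\sum_j a_j k_{x_j}^f}$. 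Thus the hypothesis delivers every separating inequality, so $(w_1, \dots, w_n) \in R$ and the interpolant exists, completing the argument.
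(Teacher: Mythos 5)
Two preliminary observations. The paper you were given never proves Theorem~\ref{T:DH}: it imports it from \cite{DH} (Theorem 3.4 there), so the comparison is with the argument in \cite{DH}. Your outline is in fact that argument --- necessity by restricting $M_\phi$ to the invariant subspaces $\A[f]$, sufficiency by a bipolar/separation argument in $\bC^n$ in which property $\bA_1(1)$ supplies a rank-one representation $\Lambda_c = \langle \,\cdot\, f, g\rangle$ of each separating functional, with the Pick hypothesis entering through Cauchy--Schwarz as $\norm{\sum_i a_i\ol{w_i}k_{x_i}^f} \le \norm{\sum_i a_i k_{x_i}^f}$. Your necessity direction and your closing computation are correct. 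The problems are exactly the two ``delicate points'' you flagged and deferred, and they are genuine gaps, not routine details.

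First, the ``reproducing identity'' $\langle f, k_{x_i}^f\rangle_H = f(x_i)$ is false in general: it holds when $f(x_i)\ne 0$ (where $k_{x_i}^f = P_{\A[f]}k_{x_i}$), but not otherwise. Take $H = H^2(\bD)$, $\A = H^\infty(\bD)$, $f(z) = z$, $x_i = 0$: then $\A[f]\ominus \I_0[f] = zH^2\ominus z^2H^2 = \bC z$, so $k_0^f$ is a nonzero multiple of $z$ and $\langle f, k_0^f\rangle \ne 0$ while $f(0)=0$. Consequently the coefficients of $g$ must be $c_j/\ol{\langle f, k_{x_j}^f\rangle}$ rather than $c_j/\ol{f(x_j)}$, and the correct nondegeneracy condition on $f$ is $\A[f]\ne\I_{x_j}[f]$ for all $j$, not $f(x_j)\ne 0$. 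This is repairable, but it changes both your formula for $\norm{\Lambda_c}$ and the set of $f$ over which your infimum runs.

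Second, and more seriously: property $\bA_1(1)$ hands you a single pair $(f,g)$ with $\norm{f}\norm{g}<\norm{\Lambda_c}+\delta$, and you have no control over which $f$ appears. If that $f$ is degenerate --- $\A[f]=\I_{x_j}[f]$ for some $j$ --- your forcing argument says nothing about $g$, and ``perturbation'' cannot rescue it: replacing $f$ by $f+\epsilon h$ destroys the identity $\Lambda_c = \langle\,\cdot\, f,g\rangle$ together with its norm control, so your claim that $\norm{\Lambda_c}$ equals the infimum of $\norm{f}\norm{g_f}$ over \emph{nondegenerate} $f$ is precisely what remains unproven; if it fails, your chain only bounds $|\sum_i w_i\ol{c_i}|$ by that infimum, which could exceed $\norm{\Lambda_c}$, and the bipolar criterion is not verified. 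The repair in \cite{DH} is structural, not perturbative: since $\Lambda_c$ annihilates the ideal $\I=\bigcap_j\I_{x_j}$, the vector $g$ (projected into $\A[f]$) automatically lies in $\A[f]\ominus\I[f]$, and a separate lemma of \cite{DH} identifies that subspace as $\spn\{k_{x_j}^f : \A[f]\ne\I_{x_j}[f]\}$, which expresses $g$ as a kernel combination for \emph{every} $f$. Even then a residue remains: this only shows $\Lambda_c = \Lambda_{c'}$ where $c'_j = d_j\ol{\langle f,k_{x_j}^f\rangle}$ is supported on the nondegenerate indices, and $c - c'$ can be a nonzero vector killed by all evaluations when the functionals $\phi\mapsto\phi(x_j)$ are linearly dependent on $\A$ (which the theorem permits, e.g.\ $\A = \{\phi\in H^\infty : \phi(0)=\phi(1/2)\}$). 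One must then show the Pick hypothesis forces $\sum_i w_i\ol{e_i}=0$ whenever $\sum_i \ol{e_i}\phi(x_i)=0$ for all $\phi\in\A$, i.e.\ that $(w_i)$ lies in the range of the evaluation map; this can be done by running your inequality chain with one fixed nondegenerate $f_0$ (such $f_0$ exists: $H$ is not a finite union of the proper closed subspaces $\{f : f(x_j)=0\}$), but it is a step your proposal never takes.
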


When $H$ is contained in an ambient $L^2$ space, we seek a nicer description of the cyclic subspaces $\A[f]$. 
Suppose $H$ and $K$ are reproducing kernel Hilbert spaces on $X$ with kernels $k$ and $j$, respectively, and that
$U : H \rightarrow K$ is a unitary map. We say that $U$ is a \emph{reproducing kernel Hilbert space isomorphism} if for every $x \in X$
there is a non-zero scalar $c_x$ such that $Uk_x = c_xj_x$.  
We require the following easy proposition, the details of which may be found in (\cite{AMc02}, Section 2.6).
\begin{prop}
Suppose $H$ and $K$ are reproducing kernel Hilbert spaces on a set $X$ and that
$U : H \rightarrow K$ is a reproducing kernel Hilbert space isomorphism. 
As sets, the multiplier algebras of $H$ and $K$ are equal. Moreover, the map $U$ induces a unitary equivalence between $M(H)$ and $M(K)$.
\end{prop}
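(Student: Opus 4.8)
The plan is to move multiplication operators between the two spaces by conjugating with $U$, recognizing the transported operator as a multiplication operator via the adjoint characterization of multipliers. First I would record that characterization precisely: an operator $T$ on a reproducing kernel Hilbert space with kernels $k_x$ is of the form $M_\phi$ for a multiplier $\phi$ exactly when every $k_x$ is an eigenvector of $T^*$. Indeed, if $T^*k_x = \lambda_x k_x$ for all $x$, then for $f \in H$ the reproducing property gives $(Tf)(x) = \langle Tf, k_x\rangle = \langle f, T^*k_x\rangle = \ol{\lambda_x}\,f(x)$, so that $Tf = \phi f$ as functions, where $\phi(x) = \ol{\lambda_x}$; hence $\phi f \in H$ for every $f$, i.e. $\phi$ is a multiplier and $T = M_\phi$. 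The converse is the fundamental identity $M_\phi^*k_x = \ol{\phi(x)}k_x$ recorded above.

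The main computation is then short, and is where the hypotheses on $U$ enter. Since $U$ is unitary with $Uk_x = c_xj_x$ and $c_x \neq 0$, applying $U^*$ gives $U^*j_x = c_x^{-1}k_x$ for every $x \in X$. Fixing a multiplier $\phi$ of $H$ and setting $T := UM_\phi U^*$ on $K$, I would compute
\[
T^*j_x = UM_\phi^*U^*j_x = c_x^{-1}UM_\phi^*k_x = c_x^{-1}\ol{\phi(x)}\,Uk_x = c_x^{-1}\ol{\phi(x)}c_x\,j_x = \ol{\phi(x)}\,j_x ,
\]
the constants $c_x$ cancelling. By the characterization above, $T$ is precisely the multiplication operator on $K$ with symbol $\phi$; in particular $\phi$ is a multiplier of $K$ and $M_\phi^K = UM_\phi^H U^*$.

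Finally I would invoke symmetry: the inverse map $U^*:K\to H$ is itself a reproducing kernel Hilbert space isomorphism, since $U^*j_x = c_x^{-1}k_x$ with nonzero constants $c_x^{-1}$, so the same argument shows every multiplier of $K$ is a multiplier of $H$ with the identical symbol. Thus $M(H)$ and $M(K)$ consist of exactly the same functions on $X$, which is the first assertion. For the second, the conjugation $\operatorname{Ad}U : T \mapsto UTU^*$ then restricts to a bijection of $M(H)$ onto $M(K)$ carrying $M_\phi^H$ to $M_\phi^K$; being implemented by the unitary $U$ it is the desired unitary equivalence. I do not expect a genuine obstacle here: the entire content is the observation that conjugation by $U$ transforms the eigenrelation $M_\phi^*k_x = \ol{\phi(x)}k_x$ into the corresponding relation for $j_x$, with the scalars $c_x$ cancelling so that the symbol is preserved.
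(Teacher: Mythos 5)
Your proof is correct. The paper does not actually write out a proof of this proposition---it defers to Section 2.6 of Agler--McCarthy \cite{AMc02}---but your argument (conjugating $M_\phi$ by $U$, using the eigenvector characterization $M_\phi^*k_x = \ol{\phi(x)}k_x$ of multiplication operators, and noting that the scalars $c_x$ cancel so the symbol is preserved) is exactly the standard rescaling argument given in that reference, so it matches the intended proof in both substance and detail.
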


The following result shows
that cyclic subspaces may naturally be identified with $H$ under a different norm. Recall that if $H \subset L^2(\Delta, \mu)$, $\A \subset M(H)$ and $\nu$ is some measure on $\Delta$
, then $\A^2(\nu)$ is the closure of $\A$ in $L^2(\Delta, \nu)$.

\begin{thm} \label{T:kernel}
Suppose $H$ is a reproducing kernel Hilbert space of analytic functions on some bounded domain $\Omega$. Suppose further that
there is a measure space $(\Delta, \mu)$ such that $H$ is a closed subspace
of $L^2(\Delta,\mu)$.
 If $\A$ is any dual algebra of multipliers on $H$, then $\A^2(|f|^2\mu)$ is a reproducing kernel Hilbert space on the set
 $\Omega_f := \{z \in \Omega: \langle f, k_z^f \rangle_H \neq 0 \}$.  The reproducing kernel for $\A^2(|f|^2\mu)$ is given by
 \[j^f(z,w) = \frac{k^f(z,w)}{\langle f , k_z^f \rangle_H \ol{\langle f, k_w^L \rangle}_H}.\]
Moreover, there is a reproducing kernel Hilbert space isomorphism
\[U : \A[f] \rightarrow \A^2(|f|^2\mu), \]
In particular, any measure of the form $|f|^2\mu$ is dominating for $\Omega_f$ with respect to $\A$.
\end{thm}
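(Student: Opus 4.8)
The plan is to construct an explicit unitary between the two spaces and transport the kernel across it. I define a map $V$ on the dense subalgebra $\A \subset \A^2(|f|^2\mu)$ by $V(g) = gf$, which lands in $\A[f] \subset H$ since $\A$ consists of multipliers. Because $H$ carries the $L^2(\Delta,\mu)$ norm, one has
\[ \|V(g)\|_H^2 = \int_\Delta |g|^2|f|^2\,d\mu = \|g\|^2_{L^2(\Delta,|f|^2\mu)} = \|g\|^2_{\A^2(|f|^2\mu)}, \]
so $V$ is isometric on a dense subspace; since its range $\{gf : g \in \A\}$ is dense in $\A[f]$, it extends to a surjective isometry $V : \A^2(|f|^2\mu) \to \A[f]$, and I set $U = V^{-1}$.

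The next step extracts a reproducing identity from the preceding Lemma. For $g \in \A$ and $x \in \Omega_f$, the eigenvector relation $(M_g^f)^* k_x^f = \ol{g(x)} k_x^f$ gives
\[ \langle gf, k_x^f\rangle_H = \langle M_g^f f, k_x^f\rangle_H = \langle f, (M_g^f)^* k_x^f\rangle_H = g(x)\langle f, k_x^f\rangle_H, \]
so on $\Omega_f$ the functional $g \mapsto g(x)$ is represented in $\A^2(|f|^2\mu)$ by $j_x^f := U\bigl(k_x^f/\ol{\langle f, k_x^f\rangle_H}\bigr)$. Defining the value at $x \in \Omega_f$ of an arbitrary $\psi \in \A^2(|f|^2\mu)$ to be $\langle \psi, j_x^f\rangle$ agrees with $g \mapsto g(x)$ on $\A$ and exhibits $j_x^f$ as a bounded reproducing kernel, so that $\A^2(|f|^2\mu)$ is a reproducing kernel Hilbert space on $\Omega_f$. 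Transporting the inner product through the unitary $V$ yields
\[ j^f(z,w) = \langle j_w^f, j_z^f\rangle = \frac{\langle k_w^f, k_z^f\rangle_H}{\langle f, k_z^f\rangle_H\, \ol{\langle f, k_w^f\rangle_H}} = \frac{k^f(z,w)}{\langle f, k_z^f\rangle_H\, \ol{\langle f, k_w^f\rangle_H}}, \]
which is the asserted formula, while $U k_z^f = \ol{\langle f, k_z^f\rangle_H}\, j_z^f$ shows that $U$ is a reproducing kernel Hilbert space isomorphism with nonvanishing scalars $c_z = \ol{\langle f, k_z^f\rangle_H}$. The final clause is then immediate, since being a reproducing kernel Hilbert space on $\Omega_f$ is exactly the assertion that $|f|^2\mu$ is dominating for $\Omega_f$ with respect to $\A$.

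The step I expect to be most delicate is the well-definedness of this function realization: one must check that distinct classes in $L^2(\Delta,|f|^2\mu)$ give distinct functions on $\Omega_f$, so that $\A^2(|f|^2\mu)$ is genuinely a space of functions rather than of equivalence classes. This is where the analyticity hypothesis on $H$ is essential. If $\langle \psi, j_z^f\rangle = 0$ for every $z \in \Omega_f$, then at points with $f(z) \neq 0$ one has $k_z^f = P_{\A[f]}k_z$, whence $(V\psi)(z) = \langle V\psi, k_z\rangle = 0$; since $V\psi \in H$ is analytic on $\Omega$ and vanishes on the nonempty open set $\{f \neq 0\}$, it vanishes identically, forcing $\psi = 0$. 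The same analyticity argument secures the injectivity of the embedding $\A \hookrightarrow \A^2(|f|^2\mu)$ needed for $V$ to be well defined in the first place.
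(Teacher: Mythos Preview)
Your proof is correct and follows essentially the same route as the paper: both build the unitary $V: \A^2(|f|^2\mu) \to \A[f]$ as multiplication by $f$, set $U = V^{-1} = V^*$, and identify the reproducing kernel $j_z^f$ as $V^*k_z^f$ rescaled by $\ol{\langle f, k_z^f\rangle_H}^{-1}$ using the eigenvector relation from the preceding lemma. The only cosmetic difference is that the paper extends the reproducing identity from $\A$ to all of $\A^2(|f|^2\mu)$ via an explicit approximation argument (showing $\phi_n f \to \phi f$ in $\A[f]$ and hence $\phi_n(z) \to \phi(z)$), whereas you define point values via the kernel and then separately verify injectivity using analyticity; the substance is the same.
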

\begin{proof}
Define a linear map $V: \A \rightarrow \A[f]$ by $ V\phi = \phi f$.  It is clear by the definition of the norms involved that $V$ extends to a unitary
\[ V: \A^2(|f|^2\mu) \rightarrow \A[f]. \]
We claim that $U := V^*$  is the required isomorphism. For notational convenience,
let $\A^2 := \A^2(|f|^2\mu)$.  If $\phi \in \A$ and $z \in \Omega_f$, we have
\[\langle \phi, V^*k_z^f \rangle_{A^2} = \langle \phi f , k_z^f \rangle_H = \left \langle f, (M_\phi^f)^*k_z^f \right \rangle_H = \phi(z)\langle f, k_z^f \rangle_H.  \]
By assumption, $\langle f, k_z^f \rangle_H \neq 0$, and so the vector $\ol{\langle f, k_z^f \rangle}^{-1}_HV^*k_z^f$  is the reproducing kernel at the point $z$
for any function in $\A$.  
In order to show this for any function $\phi \in \A^2$, find $\phi_n \in \A$ with $\| \phi_n - \phi \|_{\A^2}$ tending to $0$.  Since $\phi \in L^2(\Delta,|f|^2\mu)$,
it follows that $\phi f \in L^2(\Delta, \mu)$ . This in turn implies that the sequence $\{\phi_nf \}$ is Cauchy in $L^2(\Delta, \mu)$ and that its limit must be $\phi f$.  The subspace $\A[f]$ is closed, and so $\phi f \in \A[f]$. Taking an inner product against $k_z^f$ implies
that $\phi_n(z) \rightarrow \phi(z)$ for any $z \in \Omega_f$.  Consequently
\begin{eqnarray*}
\left \langle \phi , \frac{V^*k_z^f}{\ol{\langle f, k_z^f \rangle_H}} \right  \rangle_{\A^2} &=& \left \langle \phi f  , \frac{k_z^f}{\ol{\langle f, k_z^f \rangle_H}}  \right \rangle_H  \\
 &=& \lim_{n \rightarrow \infty} \left \langle \phi_n f  , \frac{k_z^f}{\ol{\langle f, k_z^f \rangle_H}}  \right \rangle_H
 = \lim_{n \rightarrow \infty} \phi_n(z) = \phi(z).  
 \end{eqnarray*}
 Now set $j^f_z := (\ol{\langle f, k_z^f \rangle}_H)^{-1}V^*k_z^f$.  The above reasoning shows that $j_z^f$ is the reproducing kernel for $\A^2$, and 
 \[j^f(z,w) := \langle j_w^f, j_z^f \rangle_{\A^2} =  \frac{\langle V^* k_w^f, V^*k_v^f \rangle_{\A^2}}{\langle f , k_z^f \rangle_H \ol{\langle f, k_w^f \rangle}_H} =  \frac{k^f(z,w)}{\langle f , k_z^f \rangle_L \ol{\langle f, k_w^f \rangle}_H}, \]
 which proves the theorem.
\end{proof}

\begin{cor}\label{C:kernel}
Suppose $\A$ and $H$ satisfy the hypotheses of Theorem~\ref{T:kernel}. Then the matrix 
\[\left [
(1 - w_i\ol{w_j})k^f(z_i,z_j)
\right ]_{i,j=1}^n  \]
is positive semidefinite if and only if 
\[ \left [
(1 - w_i\ol{w_j})j^f(z_i,z_j)
\right ]_{i,j=1}^n  \] 
is positive semidefinite.
\end{cor}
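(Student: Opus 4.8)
The plan is to read the relationship between the two kernels directly off Theorem~\ref{T:kernel} and to recognize that the two Pick matrices differ only by a diagonal congruence. Writing $c_z := \langle f, k_z^f \rangle_H$ for brevity, the theorem supplies the identity $j^f(z,w) = k^f(z,w)/(c_z\ol{c_w})$. Since $j^f(z_i,z_j)$ only makes sense when the relevant inner products are nonzero, each point $z_i$ lies in $\Omega_f$, and hence each scalar $c_{z_i}$ is nonzero.

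First I would substitute this identity into the entries of the second matrix. For each pair $(i,j)$ one has
\[
(1 - w_i\ol{w_j})j^f(z_i,z_j) = \frac{1}{c_{z_i}}\,(1 - w_i\ol{w_j})k^f(z_i,z_j)\,\frac{1}{\ol{c_{z_j}}}.
\]
Setting $D := \diag(c_{z_1}^{-1}, \dots, c_{z_n}^{-1})$ and letting $M$ denote the first matrix $\bigl[(1-w_i\ol{w_j})k^f(z_i,z_j)\bigr]_{i,j=1}^n$, the displayed identity says precisely that the second matrix equals $DMD^*$.

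The final step is to invoke the elementary fact that congruence by an invertible matrix preserves positive semidefiniteness. Since every $c_{z_i} \neq 0$, the diagonal matrix $D$ is invertible, so for any vector $v \in \bC^n$ one has $v^*(DMD^*)v = (D^*v)^*M(D^*v)$; as $v \mapsto D^*v$ ranges over all of $\bC^n$, the right-hand side is nonnegative for every $v$ if and only if $M$ is positive semidefinite. This yields the equivalence claimed in the corollary.

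There is no genuine obstacle here, as the entire content is already packaged in the kernel formula of Theorem~\ref{T:kernel}. The only point requiring a moment's care is to confirm that the congruence matrix $D$ is invertible, which is guaranteed by restricting attention to points of $\Omega_f$, where by definition $\langle f, k_z^f \rangle_H \neq 0$.
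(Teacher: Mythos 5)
Your proof is correct and is essentially the paper's own argument in different packaging: the paper writes the second matrix as the Schur product of the first with the rank-one positive matrix $\bigl[\bigl(\langle f,k_{z_j}^f\rangle\,\ol{\langle f,k_{z_i}^f\rangle}\bigr)^{-1}\bigr]$, which is exactly your congruence $DMD^*$ with $D = \diag(c_{z_i}^{-1})$. If anything, your phrasing via an invertible diagonal congruence makes the ``if and only if'' slightly more transparent than the paper's one-line Schur product remark, which strictly speaking only spells out one direction.
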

\begin{proof}
By Theorem~\ref{T:kernel}, the matrix $[(1 - w_i\ol{w_j})j^f(z_i,z_j)]$ is the Schur product of $[(1 - w_i\ol{w_j})k^f(z_i,z_j)]$ and $\left [ \left (\langle f , k_{z_j}^f \rangle_L \ol{\langle f, k_{z_i}^f \rangle}_L \right )^{-1} \right ]$, the latter of which is manifestly positive semidefinite.
\end{proof}

We can now summarize the results of this section so far.

\begin{thm}\label{T:main}
Let $(\Delta, \B, \mu)$ be a $\sigma$-finite measure space such that  $H$  is both a reproducing kernel Hilbert space 
over a set $X$ and a closed subspace of $L^2(\Delta,\mu)$. Suppose that $\A$ is a dual algebra of multipliers on 
$H$ which has property $\bA_1(1)$. Then the following statement holds:
given $x_1, \dots, x_n \in X$ and $w_1, \dots, w_n \in \bC$, there is a multiplier $\phi \in \A$
such that $\phi(x_i) = w_i$ and $\| M_\phi \| \leq 1$ if and only if the matrix
\[ [(1-w_i\ol{w_j})k^\nu(x_i,x_j)]_{i,j=1}^n\]
is positive semidefinite for every measure of the form $\nu = |f|^2\mu$ where $ f \in H$.
\end{thm}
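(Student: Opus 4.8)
The plan is to read Theorem~\ref{T:main} as a restatement of Theorem~\ref{T:DH}, trading the abstract cyclic-subspace kernels $k^f$ for the concrete reproducing kernels $k^\nu$ of the spaces $\A^2(\nu)$ with $\nu = |f|^2\mu$. First I would invoke Theorem~\ref{T:DH}: since $\A$ is a dual algebra of multipliers on the reproducing kernel Hilbert space $H$ and has property $\bA_1(1)$, there is a multiplier $\phi \in \A$ with $\phi(x_i) = w_i$ and $\|M_\phi\| \leq 1$ precisely when $[(1-w_i\ol{w_j})\langle k_{x_j}^f, k_{x_i}^f\rangle]_{i,j=1}^n \geq 0$ for every $f \in H$. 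Because $\langle k_{x_j}^f, k_{x_i}^f\rangle = k^f(x_i,x_j)$ by definition of the associated kernel, this is exactly the requirement that $[(1-w_i\ol{w_j})k^f(x_i,x_j)] \geq 0$ for all $f \in H$.

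Next I would fix $f \in H$ and set $\nu = |f|^2\mu$. The hypotheses of Theorem~\ref{T:kernel} are met (recall $\one \in H$ and $H \subset L^2(\Delta,\mu)$), so $\A^2(\nu)$ is a reproducing kernel Hilbert space on $\Omega_f$ whose reproducing kernel is $j^f$; by the definition of $k^\nu$ this says $k^\nu = j^f$. Corollary~\ref{C:kernel} then supplies the equivalence $[(1-w_i\ol{w_j})k^f(x_i,x_j)] \geq 0 \iff [(1-w_i\ol{w_j})k^\nu(x_i,x_j)] \geq 0$, the mechanism being that the two matrices differ by a Schur multiplication against the positive semidefinite rank-one matrix $[(\langle f, k_{x_j}^f\rangle\,\ol{\langle f, k_{x_i}^f\rangle})^{-1}]$, an operation that both preserves and reflects positivity. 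Quantifying this equivalence over all $f \in H$, equivalently over all measures $\nu = |f|^2\mu$, would close both implications simultaneously.

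The step that demands genuine care is matching the interpolation nodes to the domain $\Omega_f = \{z : \langle f, k_z^f\rangle \neq 0\}$. The kernel $k^\nu = j^f$ is defined only at nodes with $\langle f, k_{x_i}^f\rangle \neq 0$, and the Schur argument of Corollary~\ref{C:kernel} likewise needs every $x_i \in \Omega_f$; thus the equivalence above is immediate only for those $f$ that vanish at no interpolation node. For the ``only if'' direction this causes no trouble, since one merely records the conclusion for precisely the admissible measures $\nu$. The ``if'' direction is the real obstacle: to apply Theorem~\ref{T:DH} I must establish positivity of $[(1-w_i\ol{w_j})k^f(x_i,x_j)]$ for \emph{every} $f \in H$, including those vanishing at some $x_i$, where $k^\nu$ carries no information (note this is not automatic, as $k_{x_i}^f$ may be a nonzero eigenvector even when $\langle f, k_{x_i}^f\rangle = 0$). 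I would treat these degenerate generators by perturbation: replacing $f$ with $f + t\one$, where $\one \in H$ does not vanish at any node, produces for all sufficiently small $t \neq 0$ a generator nonvanishing at every $x_i$, hence a positive matrix by the admissible case; letting $t \to 0$ and appealing to continuity of the finitely many entries $\langle k_{x_j}^{f+t\one}, k_{x_i}^{f+t\one}\rangle$ in the generating vector would recover positivity for $f$ itself. Verifying this continuity of the cyclic-subspace kernel entries, which can jump as the subspace $\A[f+t\one]$ varies, is where the technical effort concentrates.
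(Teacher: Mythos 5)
Your first two paragraphs are exactly the paper's proof: Theorem~\ref{T:main} is obtained by combining Theorem~\ref{T:DH} with Corollary~\ref{C:kernel} and the identification $k^{|f|^2\mu} = j^f$ coming from Theorem~\ref{T:kernel}. (The paper proves the easy direction directly, by compressing $I - M_\phi^f(M_\phi^f)^*$ against the span of the $k_{x_i}^f$, but that is just the easy half of Theorem~\ref{T:DH}, so this is not a substantive difference.)

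The trouble is your third paragraph. You are right that the argument needs attention at generators $f$ for which some node $x_i$ falls outside $\Omega_f$ (there the entries $k^\nu(x_i,x_j)$, $\nu = |f|^2\mu$, do not exist and Corollary~\ref{C:kernel} does not literally apply), and the paper passes over this in silence; flagging it is to your credit. But your diagnosis is mistaken and your repair has a genuine hole. The parenthetical claim that $k_{x_i}^f$ can be a nonzero eigenvector while $\langle f, k_{x_i}^f\rangle = 0$ is false: if $k_{x_i}^f \neq 0$, then $\A[f] \ominus \I_{x_i}[f] = \bC\, k_{x_i}^f$ is one-dimensional, so $\langle f, k_{x_i}^f\rangle = 0$ would force $f \in \I_{x_i}[f]$ (note $f \in \A[f]$ since $\A$ is unital); but $\I_{x_i}[f]$ is a closed $\A$-invariant subspace ($\I_{x_i}$ being an ideal of $\A$), so once it contains $f$ it contains all of $\A[f]$, contradicting $\A[f] \neq \I_{x_i}[f]$. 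Consequently $x_i \notin \Omega_f$ forces $k_{x_i}^f = 0$: the $i$-th row and column of the matrix in Theorem~\ref{T:DH} then vanish, so positivity of that matrix is equivalent to positivity of its principal submatrix over $\{i : x_i \in \Omega_f\}$, which is precisely what the hypothesis on $k^\nu$ delivers via Corollary~\ref{C:kernel} applied to those nodes. The degenerate case is thus dispatched by a one-line algebraic observation, and no limiting argument is needed. Your perturbation scheme, by contrast, hinges on continuity of $t \mapsto \langle k_{x_j}^{f+t\one}, k_{x_i}^{f+t\one}\rangle$ at $t=0$, which you concede you cannot verify; since the cyclic subspaces $\A[f+t\one]$ (and hence the projections defining these kernels) can change discontinuously as $t \to 0$, there is no reason to expect such continuity, and nothing in the paper or in \cite{DH} provides it. As written, then, your proof is incomplete exactly at the step you identify as the crux --- the gap is real, but it is closed by the observation above, not by perturbation.
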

\begin{proof}
If such a $\phi$ exists, then $M^f_\phi$ is a contraction for every $f \in H$. It follows that the operator
\[I - M_\phi^f(M^f_\phi)^* \]
is positive semidefinite. Taking an inner product against finite spans of the functions $k_z^f$ implies that 
the matrices \[\left [
(1 - w_i\ol{w_j})k^f(z_i,z_j)
\right ]_{i,j=1}^n \]
are positive semidefinite. Now apply Corollary~\ref{C:kernel} and note that $k^{|f|^2\mu} = j^f$.
The nontrivial direction is obtained by combining Theorem~\ref{T:DH} and Corollary~\ref{C:kernel}.
\end{proof}

We will use the following important factorization result 
of Bercovici-Westwood (\cite{BW}, Theorem 1).
Recall that
$\theta$ is Lebesgue measure on $\partial \Omega$.

\begin{thm}[Bercovici-Westwood]
Suppose $\Omega$ is either $\bD^d$ or $\bB_d$. Then for any function 
$h \in L^1(\partial \Omega, \theta)$ and $\varepsilon > 0$, there are functions $f$ and $g$ in $H^2(\Omega)$ such that \
$\| f \|_2 \|g\|_2 \leq \|h\|_1$ and $\|f - g \ol{h} \|_2 < \varepsilon$.
In particular, $\M(H^2(\Omega))$ has property $\bA_1(1)$.
\end{thm}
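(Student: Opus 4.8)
The first assertion is precisely Theorem~1 of Bercovici--Westwood \cite{BW}, so I would quote it verbatim and concentrate on deducing property $\bA_1(1)$ for $\A := \M(H^2(\Omega)) = H^\infty(\Omega)$. The plan begins with an explicit description of the predual. Since $H^2(\Omega)$ sits isometrically inside $L^2(\partial\Omega,\theta)$ and $\A$ acts by multiplication, every weak-$*$ continuous functional $\pi$ on $\A$ is obtained by restricting a trace-class functional on $B(H^2(\Omega))$; writing such an operator in nuclear form and pairing against multipliers exhibits $\pi$ as $\pi(M_\phi) = \int_{\partial\Omega}\phi\,h\,d\theta$ for some $h \in L^1(\partial\Omega,\theta)$. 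Because products $a\bar b$ with $a,b \in H^2(\Omega)$ are dense in $L^1(\partial\Omega,\theta)$, this identifies $\A_*$ with the quotient $L^1(\partial\Omega,\theta)/{}^\perp\A$, and the quotient norm shows that $\|\pi\| < 1$ permits a representative $h$ with $\|h\|_1 < 1$. The key observation is that the rank-one functionals appearing in the definition of $\bA_1(1)$ are exactly $[x,y](M_\phi) := \langle M_\phi x, y\rangle = \int_{\partial\Omega}\phi\,x\bar y\,d\theta$ with $x,y \in H^2(\Omega)$, so establishing $\bA_1(1)$ amounts to representing each such $h$, modulo ${}^\perp\A$, as a single product $x\bar y$ with $\|x\|_2\|y\|_2 < 1$.

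With this dictionary in place, the Bercovici--Westwood factorization is precisely the approximate form of the desired conclusion: given $h$ with $\|h\|_1 < 1$ and $\varepsilon > 0$, it produces $f,g \in H^2(\Omega)$ whose product represents $\pi$ to within $\varepsilon$ in the $\A_*$-norm while obeying the sharp estimate $\|f\|_2\|g\|_2 \le \|h\|_1$. I would record this as a single lemma: every $\pi \in \A_*$ with $\|\pi\| < 1$ and every $\varepsilon>0$ admit a rank-one functional $[x,y]$ with $\|x\|_2\|y\|_2 \le \|\pi\|$ and $\|\pi - [x,y]\|_{\A_*} < \varepsilon$, the latter by bounding $|(\pi-[x,y])(M_\phi)|$ against $\|\phi\|_\infty$ times the $L^1$ error produced by the factorization.

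The remaining step is the standard upgrade from approximate to exact factorization, which is the iterative (successive-approximation) argument underlying all $\bA_1(C)$ results; I would cite the abstract reduction from Bercovici--Foia\c s--Pearcy \cite{ABFP}. One builds Cauchy sequences $x_n, y_n \in H^2(\Omega)$ by repeatedly factoring the residual functional $\pi - [x_n,y_n]$, adding small corrections to $x_n$ and $y_n$, and summing a geometric series of errors controlled by a summable budget $\sum \varepsilon_n$; the limits $x,y$ then satisfy $\pi = [x,y]$ exactly. The crucial point is that the sharp bound $\|f\|_2\|g\|_2 \le \|h\|_1$ keeps the running product $\|x_n\|_2\|y_n\|_2$ strictly below $1$ throughout, so that the constant in $\bA_1(C)$ is exactly $C = 1$ rather than something larger.

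I expect the genuine obstacle to lie in this last iteration: one must choose the corrections $x_{n+1} = x_n + \xi_n$ and $y_{n+1} = y_n + \eta_n$ so that the cross terms $[x_n,\eta_n]$ and $[\xi_n,y_n]$ do not spoil convergence, while simultaneously keeping the norm under $1$. Balancing the error budget $\sum \varepsilon_n$ against this norm bookkeeping is the delicate part, and is exactly where the extremal constant in the factorization is consumed; by comparison, the predual identification and the passage from the $L^2$ estimate to an $\A_*$-norm estimate are routine.
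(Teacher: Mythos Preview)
The paper does not prove this theorem; it is quoted as a black box from Bercovici--Westwood \cite{BW}, with the $\bA_1(1)$ conclusion included as part of the citation rather than derived here. Your plan---quote the factorization and then run the standard Bercovici--Foia\c s--Pearcy iterative scheme to pass from approximate to exact rank-one factorization with the sharp constant---is correct and is exactly how the deduction goes in the dual-algebra literature, but it goes beyond what the present paper actually does.
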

Property $\bA_1(1)$ is hereditary for weak-$*$ closed subspaces (\cite{ABFP}, Proposition 2.04), and so we may combine  the above result with Theorem~\ref{T:main}, which proves
Theorem~\ref{T:thm1}.
In order to prove Theorem~\ref{T:thm2}, we employ
a versatile factorization theorem of Punraru (\cite{Prun}, Theorem 4.1). This result applies to any instance where $H$ is a reproducing kernel Hilbert space
on a measure space $(X, \B, \mu)$ and $H$ is a closed subspace of $L^2(X,\mu)$. In particular, it applies to any Bergman space, but not to Hardy space. Any reproducing kernel Hilbert space which satisfies these hypothesis always has the property that $\|M_\phi\| = \sup_{x \in X}|\phi(x)|$ for any multiplier $\phi$.

\begin{thm} [Prunaru] \label{T:prunaru}
Let $(X, \B, \mu)$ be a $\sigma$-finite measure space such that  $H$  is a reproducing kernel Hilbert space 
over $X$ and a closed subspace of $L^2(X,\mu)$ Then the multiplier algebra $\M(H)$ has property $\bA_1(1)$.
\end{thm}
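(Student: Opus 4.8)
The plan is to deduce property $\bA_1(1)$ from a one--step $L^1$--factorization adapted to the subspace $H$, in the spirit of Bercovici--Foia\c{s}--Pearcy. First I would unwind the definition. Fix $\pi \in \mathcal{M}(H)_*$ with $\|\pi\| < 1$. Representing $\pi$ through a trace--class operator and decomposing into rank--one pieces, one gets $\pi(M_\phi) = \sum_k \langle M_\phi x_k, y_k\rangle_H$ with $x_k, y_k \in H$ and $\sum_k \|x_k\|_2\|y_k\|_2 < 1$, since this infimum over representations is exactly the predual norm. Because $H$ is a closed subspace of $L^2(X,\mu)$ its inner product is the restriction of the $L^2$ pairing, and each multiplier acts by pointwise multiplication, so $\langle M_\phi x_k, y_k\rangle_H = \int_X \phi\, x_k \ol{y_k}\, d\mu$. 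Setting $h := \sum_k x_k \ol{y_k}$ we obtain $h \in L^1(X,\mu)$ with $\|h\|_1 < 1$ and $\pi(M_\phi) = \int_X \phi\, h\, d\mu$ for every multiplier $\phi$. Thus representing $\pi$ as a single vector functional $\langle M_\phi u, v\rangle_H$ amounts to producing $u, v \in H$ with $\int_X \phi(u\ol v - h)\, d\mu = 0$ for all $\phi \in \mathcal{M}(H)$ and $\|u\|_2\|v\|_2 < 1$; that is, $u\ol v$ and $h$ must agree modulo the preannihilator of $\mathcal{M}(H)$ in $L^1(\mu)$.

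Second, I would isolate the genuine analytic content as a one--step factorization lemma: given $h \in L^1(\mu)$ with $\|h\|_1 < 1$ and $\epsilon > 0$, there exist $u, v \in H$ with $\|u\|_2\|v\|_2 < 1$ and $\sup_{\|\phi\|_\infty \le 1} \big|\int_X \phi(u\ol v - h)\, d\mu\big| < \epsilon$. Here the standing hypotheses pay off: because $H \subset L^2(\mu)$ is a reproducing kernel space, one has the isometry $\|M_\phi\| = \sup_x |\phi(x)| = \|\phi\|_\infty$, so the assignment $M_\phi \mapsto \phi$ identifies $\mathcal{M}(H)$ isometrically and weak-$*$ homeomorphically with a weak-$*$ closed subalgebra of $L^\infty(\mu)$, and its predual with a quotient of $L^1(\mu)$; the displayed quantity is then literally the distance from the error to the preannihilator. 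Granting the lemma, I would run the standard successive--approximation scheme: factor $\pi$ approximately by $\omega_{u_1,v_1}(A) := \langle Au_1, v_1\rangle$, observe that the residual $\pi - \omega_{u_1,v_1}$ is again a predual element of small norm, factor it, and iterate, with the norm tolerances chosen to decrease geometrically. A careful bookkeeping of the vector norms --- routine in this theory --- lets the partial factors converge in $H$ to $u, v$ with $\pi = \omega_{u,v}$ and $\|u\|_2\|v\|_2 < 1$, which is property $\bA_1(1)$.

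The hard part will be the factorization lemma, precisely because the factors are required to lie in the subspace $H$. On the ambient space $L^2(\mu)$ the factorization is trivial: writing $h = \zeta\,|h|$ with $|\zeta| = 1$ and taking $u_0 = \zeta\sqrt{|h|}$, $v_0 = \sqrt{|h|}$ gives $u_0\ol{v_0} = h$ with $\|u_0\|_2\|v_0\|_2 = \|h\|_1$, but these square--root factors almost never remain in the analytic subspace $H$, and for Bergman spaces there is no inner--outer machinery to repair this. My strategy to overcome this would be to first reduce to a dense, well--behaved class of densities $h$ --- bounded and supported where the reproducing kernel is controlled --- and then approximate $u_0, \ol{v_0}$ within $L^2(\mu)$ by elements of $H$ while preserving the pairing against bounded multipliers, using the weak-$*$ density of the products $\{u\ol v : u, v \in H\}$ together with a weak-compactness argument in the $L^1$--$L^2$ duality. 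This is the step where the general Bergman--type hypothesis, rather than the boundary structure exploited in Bercovici--Westwood, is essential, and it is the crux of the theorem.
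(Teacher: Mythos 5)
A point of reference first: the paper contains no proof of this statement. It is quoted verbatim from Prunaru \cite{Prun} (Theorem 4.1) and used as a black box, so your proposal has to stand on its own as a complete argument, and it does not. Everything you actually carry out is correct but standard: representing $\pi \in \M(H)_*$ with $\|\pi\| < 1$ by $h = \sum_k x_k \ol{y_k} \in L^1(\mu)$ with $\|h\|_1 < 1$, and identifying $\M(H)_*$ isometrically with $L^1(\mu)/{}^{\perp}\M(H)$, merely transfers the problem. The ``factorization lemma'' you then pose --- approximate a given $h$, modulo ${}^{\perp}\M(H)$, by a \emph{single} product $u\ol{v}$ with $u, v \in H$ and $\|u\|_2\|v\|_2 < 1$ --- is the entire content of Prunaru's theorem, and you do not prove it. The strategy you sketch for it is essentially circular: ``weak-$*$ density of the products $\{u\ol{v} : u,v \in H\}$'' with the stated norm control is a restatement of the lemma itself. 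What is genuinely easy is that finite sums $\sum_k u_k\ol{v_k}$ are dense (that is exactly the rank-one decomposition you already invoked); the difficulty, which your outline never addresses, is collapsing such a sum to a single product with both factors in the analytic subspace $H$.

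Second, even granting the lemma, the iteration you dismiss as ``routine bookkeeping'' does not deliver $\bA_1(1)$. If you factor the residual $\pi - \omega_{u_1,v_1}$ by a fresh independent pair and keep adding, you obtain $\pi = \sum_k \omega_{u_k,v_k}$ in norm; a norm-convergent sum of rank-one functionals is not a vector functional, and indeed every predual element admits such a representation, so nothing is gained. If instead you update the vectors, $u_{n+1} = u_n + a_n$ and $v_{n+1} = v_n + b_n$, where $\omega_{a_n,b_n}$ approximates a residual of norm $\delta_n$ (so $\|a_n\| \approx \|b_n\| \approx \delta_n^{1/2}$ after balancing), then the new residual equals $-\omega_{a_n,v_n} - \omega_{u_n,b_n}$ up to the tolerance, and these cross terms are only bounded by $\delta_n^{1/2}\max(\|u_n\|,\|v_n\|)$: the error goes from $\delta_n$ to roughly $\delta_n^{1/2}$, i.e.\ it grows rather than decays. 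Controlling exactly these cross terms is what the stronger statements in the literature are designed for: Bercovici--Westwood \cite{BW} control the approximation error in the $L^2$ norm (note the form of their Theorem 1 as quoted in the paper), while Prunaru proves the stronger property $X(0,1)$, whose weakly null approximating sequences are what make a correction scheme converge (see \cite{ABFP} for this machinery). Without an analogue of one of these stronger approximate factorizations, your outline cannot be completed.
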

In fact, Prunaru shows that $M(H)$ satisfies a much stronger predual factorization property known as $X(0,1)$.  In the particular case where $H$ is a Bergman space,  Bercovici proves a result similar to Theorem~\ref{T:prunaru} in \cite{Berc}.
 An application of Theorem~\ref{T:main} gives us what we need.

\begin{cor} \label{C:prun}
Let $(X, \B, \mu)$ be a $\sigma$-finite measure space such that  $H$  is both a reproducing kernel Hilbert space 
over $X$ and a closed subspace of $L^2(X,\mu)$. Suppose $\A$ is a dual algebra of multipliers on $H$.   If $x_1, \dots, x_n \in X$ and $w_1, \dots, w_n \in \bC$,
then there is a function $\phi \in \A$ with $\| \phi \|_\infty \leq 1$ and $\phi(x_i) = w_i$ for $i=1, \dots,  n$ if and only if 
\[ 
\left [
(1 - w_i\ol{w_j})k^\nu(x_i,x_j)
\right ]_{i,j=1}^n \geq 0
\]
for every measure of the form $\nu = |f|^2\mu$, for $f \in H$.
\end{cor}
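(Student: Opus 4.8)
The plan is to recognize this corollary as a direct consequence of Theorem~\ref{T:main}, once its hypotheses are verified for the subalgebra $\A$. Theorem~\ref{T:main} requires three ingredients: that $(X,\B,\mu)$ be $\sigma$-finite with $H$ a reproducing kernel Hilbert space sitting as a closed subspace of $L^2(X,\mu)$; that $\A$ be a dual algebra of multipliers; and, crucially, that $\A$ have property $\bA_1(1)$. The first two are handed to us outright in the statement of the corollary (taking $\Delta = X$ in the notation of Theorem~\ref{T:main}), so the only genuine work is to establish property $\bA_1(1)$ for $\A$.

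First I would invoke Prunaru's theorem (Theorem~\ref{T:prunaru}): under exactly these hypotheses on $(X,\B,\mu)$ and $H$, the full multiplier algebra $\M(H)$ has property $\bA_1(1)$. Next, since $\A$ is by definition a unital weak-$*$ closed subalgebra of $\M(H)$, it is in particular a weak-$*$ closed subspace, and property $\bA_1(1)$ is hereditary for weak-$*$ closed subspaces by (\cite{ABFP}, Proposition 2.04). Combining these two facts yields that $\A$ itself has property $\bA_1(1)$, supplying the one missing hypothesis.

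It then remains only to reconcile the two normalizations. Theorem~\ref{T:main} phrases its conclusion in terms of the operator-norm condition $\| M_\phi \| \leq 1$, whereas the corollary asks for $\|\phi\|_\infty \leq 1$. But in the present setting $H$ is a closed subspace of an ambient $L^2(X,\mu)$, and as noted just before Theorem~\ref{T:prunaru} this forces $\|M_\phi\| = \sup_{x \in X}|\phi(x)| = \|\phi\|_\infty$ for every multiplier $\phi$. Hence the two contractivity conditions coincide, and applying Theorem~\ref{T:main} delivers precisely the stated equivalence for measures of the form $\nu = |f|^2\mu$ with $f \in H$. The step I would flag as the substantive one — rather than a true obstacle — is the appeal to heredity of $\bA_1(1)$: one must be sure that $\A$ is genuinely a weak-$*$ closed subspace of $\M(H)$ (not merely a subalgebra closed in some weaker topology), so that Proposition 2.04 of \cite{ABFP} applies verbatim, and that Prunaru's hypotheses are met with the \emph{same} measure space $(X,\B,\mu)$ appearing in the corollary. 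Everything else is bookkeeping.
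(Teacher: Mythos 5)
Your proposal is correct and follows essentially the same route as the paper: invoke Prunaru's theorem for $\M(H)$, pass property $\bA_1(1)$ down to $\A$ via the heredity result (\cite{ABFP}, Proposition 2.04), reconcile $\|M_\phi\|$ with $\|\phi\|_\infty$ using the remark preceding Theorem~\ref{T:prunaru}, and apply Theorem~\ref{T:main}. In fact you make explicit two steps the paper leaves implicit (the heredity argument for the subalgebra $\A$ and the norm identification), which is exactly the right bookkeeping.
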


Theorem~\ref{T:thm2} now follows from Corollary~\ref{C:prun} by taking $X$ to be a bounded domain $\Omega$ and setting $H = L^2_a(\Omega)$.
\bigskip

\small{\noindent \textbf{Acknowledgements}
The author wishes to express his gratitude towards  K.R. Davidson  and J. E. McCarthy for many helpful discussions and comments on this manuscript.

\end{document}